\documentclass[11pt]{amsart}
\usepackage[utf8]{inputenc}
\usepackage{bm}
\usepackage{fontenc}
\usepackage{amsfonts}
\usepackage{amssymb}
\usepackage{amsmath}
\usepackage{amsthm}\usepackage{mathtools}
\usepackage{enumerate}
\usepackage{enumitem} 
\usepackage[pagebackref,colorlinks,linkcolor=blue,citecolor=blue,urlcolor=blue,hypertexnames=false]{hyperref}
\usepackage{mathrsfs}
\usepackage{tikz}
\usetikzlibrary{calc}
\usepackage{marginnote}
\usepackage{xcolor,enumitem}
\usepackage{soul}
\usepackage[all,cmtip]{xy} \usepackage{caption}

\newcommand{\N}{\mathbb{N}}

\newcommand{\Lip}{\mathrm{Lip}}

\newtheorem{theorem}{Theorem} 
\newtheorem*{theorem*}{Theorem}
\newtheorem{proposition}[theorem]{Proposition}

\newtheorem*{proposition*}{Proposition}
\newtheorem{lemma}[theorem]{Lemma}
\newtheorem*{lemma*}{Lemma}
\newtheorem{corollary}[theorem]{Corollary}
\newtheorem*{corollary*}{Corollary}

\newtheorem*{fact*}{Fact}
\theoremstyle{definition}

\newtheorem*{definition*}{Definition}

\newtheorem*{acknowledgements*}{Acknowledgements}
\newtheorem*{disclaimer*}{Disclaimer on originality of the work}
\newtheorem*{leanformalization*}{Lean formalization}

\newtheorem*{claim*}{Claim}

\newtheorem*{conjecture*}{Conjecture}

\theoremstyle{remark}

\newtheorem*{example*}{Example}

\newtheorem*{remark*}{Remark}

\newtheorem*{note*}{Note}
\newtheorem*{question*}{Question}

\newcommand{\norm}[1]{\left\lVert #1 \right\rVert}

 \usepackage{anysize}
\usepackage{float}

\usepackage{anysize}
\marginsize{3.7cm}{3.7cm}{3.2cm}{3.2cm}

\begin{document}

\date{\today} % Activate to display a given date or no date

\title[Expanders prevent Property (H)]{Expanders prevent Property (H)}

\author[Braga]{Bruno M. Braga}
\address[B. M. Braga]{IMPA, Estrada Dona Castorina 110, 22460-320, Rio de Janeiro, Brazil}
\email{demendoncabraga@gmail.com}
\urladdr{\url{https://sites.google.com/site/demendoncabraga}}
\thanks {B. M. Braga  was partially supported by FAPERJ, grant E-26/200.167/2023,  by CNPq, grant 303571/2022-5, and by Serrapilheira, grant R-2501-51476.}

\author[Gartland]{C. Gartland}
\address[C. Gartland]{Department of Mathematics and Statistics, University of
North Carolina at Charlotte, Charlotte, North Carolina, USA}
\email{cgartla1@charlotte.edu}
\urladdr{\url{https://chrisgartland.wordpress.com/}}
\thanks{C. Gartland was partially supported by NSF Award DMS-2555144.}

\author[Lancien]{G. Lancien}
\address[G. Lancien]{Universit\'e Marie et Louis Pasteur, CNRS, LmB (UMR 6623), F-25000 Besan\c con, France.}
\email{gilles.lancien@univ-fcomte.fr}
%\urladdr{\url{ }}
\thanks{G. Lancien was partially supported by the French ANR project No. ANR-24-CE40-0892-01.}

\author[Motakis]{P. Motakis}
\address[P. Motakis]{Department of Mathematics and Statistics, York University, 4700 Keele Street, Toronto, Ontario, M3J 1P3, Canada}
\email{pmotakis@yorku.ca}
\urladdr{\url{https://pmotakis.mathstats.yorku.ca/}}
\thanks{P. Motakis was partially supported by NSERC Grant RGPIN-2021-03639.}

\author[Perneck\'a]{E. Perneck\'a}
\address[E. Perneck\'a]{Faculty of Information Technology, Czech Technical University in Prague, Th\'akurova 9, 160 00, Prague 6, Czech Republic}
\email{eva.pernecka@fit.cvut.cz}
%\urladdr{\url{ }}
\thanks{}

\author[Schlumprecht]{Th. Schlumprecht}
\address[Th. Schlumprecht]{Texas A\&M University, College Station, TX 77843,
USA and Faculty of Electrical Engineering, Czech Technical University in Prague, Zikova 4, 166 27, Prague}
\email{t-schlumprecht@tamu.edu}
\urladdr{\url{https://people.tamu.edu/~t-schlumprecht/}}
\thanks{Th. Schlumprecht was partially supported by NSF Award DMS-2349322.}
\begin{abstract}
    We show that if a sequence of expander graphs equi-coarsely embeds into a Banach space, then this Banach space  fails Kasparov and Yu's Property (H).  Consequently,  no Banach space with Property (H) can be coarsely universal for all countable groups. We provide a Lean verification of our results.
\end{abstract}

\maketitle

\section{Introduction}

The Novikov conjecture, which asserts the homotopy invariance of the higher
signatures of closed oriented manifolds, is one of the central open problems
in topology, and much of the progress on it has been geometric in nature: by
a theorem of Yu, it holds for every countable group which coarsely embeds
into a Hilbert space (see \cite[Corollary 1.2]{Yu2000} and \cite[Theorem 6.1]{SkandalisTuYu2002Top}). After Gromov constructed random groups that admit no coarse embedding into
a Hilbert space (\cite[Section 4.8]{Gromov2003GAFA}), there has been a sustained effort to
replace the Hilbert space in Yu's theorem by larger class of Banach spaces. Kasparov
and Yu carried this out in \cite{KasparovYu2012GeoTop}, where they introduced
a property of Banach spaces called \emph{Property (H)} and showed that the
Novikov conjecture holds for any countable group which coarsely embeds into a
Banach space satisfying it (\cite[Theorem 1.2]{KasparovYu2012GeoTop}). We recall that a Banach space $X$ has \emph{Property (H)} if there are a uniformly continuous function $F\colon S_X\to S_{\ell_2}$ and increasing sequences of finite dimensional Banach spaces $(X_n)_n$ and $(H_n)_n$ of $X$ and of the Hilbert space $\ell_2$, respectively, such that $X=\overline{\bigcup_nX_n}$, each $F(S_{X_n})\subseteq S_{H_n}$, and each $F_{\restriction S_{X_n}}\colon  S_{X_n}\to S_{H_n}$ is a degree one map. If the degree one condition is replaced by the weaker condition of having nonzero degree, then $X$ is said to have {\it rational Property (H)}. At present, all known examples of Banach spaces with Property (H) come from Mazur's modified maps constructed by Odell and Schlumprecht in \cite{OdellSchlumprecht1994Acta} (e.g., this is the case for all separable Banach lattices with nontrivial cotype, see \cite[Theorem 1.1]{ChengWang2018JMAA}).

The theorem of Kasparov and Yu raises the possibility of settling the Novikov conjecture in a single stroke, reducing the problem to showing that every countable group admits a coarse embedding into a Property (H) space. The following theorem and corollaries show that this is impossible.

\begin{theorem}\label{Thm.main}
Let $(X_n)_n$ and $(Y_n)_n$ be sequences of finite dimensional normed spaces such that $Y_n\subseteq L_1$ for all $n\in\N$. Let $(G_n=(V_n,E_n))_n$ be a sequence of expander graphs, and assume that there is a sequence of equi-coarse embeddings $(V_n\to X_n)_n$. Let $(F_n\colon S_{X_n}\to S_{Y_n})_n$ be a sequence of equi-uniformly continuous maps. Then, for all sufficiently large $n\in\N$, the maps $F_n$ have degree zero.
\end{theorem}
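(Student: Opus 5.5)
The plan is to turn a nonzero degree map $F_n$ into a map from the expander $V_n$ into $L_1$ that violates the $L_1$ Poincaré inequality for expanders. That inequality says: there is $C<\infty$, depending only on the spectral gap and degree bound of $(G_n)_n$, such that for every $n$ and every $g\colon V_n\to L_1$,
\[
\frac{1}{|V_n|^2}\sum_{u,v\in V_n}\|g(u)-g(v)\|\le \frac{C}{|E_n|}\sum_{\{u,v\}\in E_n}\|g(u)-g(v)\|.
\]
This follows from the Hilbert case via the cut-cone decomposition of $L_1$-metrics, or equivalently from Cheeger's inequality. We argue by contradiction: assume $\deg F_n\neq 0$ for infinitely many $n$. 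Let $f_n\colon V_n\to X_n$ be the equi-coarse embeddings, with common moduli $\rho_-\le\rho_+$, and let $\omega$ be a common modulus of uniform continuity for the $F_n$. Note that a nonzero degree requires $\dim X_n=\dim Y_n$.

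\textbf{The map $g_z$.} Fix a scale $t>0$, to be chosen large, and put $\psi(s)=\min(s,t)/t$. For a point $z\in X_n$ define $g_z\colon V_n\to Y_n\subseteq L_1$ by
\[
g_z(v)=\psi(\|f_n(v)-z\|)\,F_n\!\left(\frac{f_n(v)-z}{\|f_n(v)-z\|}\right),
\]
with the value $0$ when $f_n(v)=z$. Each $g_z$ takes values in the unit ball of $Y_n$.

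\textbf{Step 1: edges are small.} Let $\{u,v\}\in E_n$. Then $\|f_n(u)-f_n(v)\|\le\rho_+(1)$.
If both points lie at distance at least $\sqrt t$ from $z$, the normalized vectors differ by at most $2\rho_+(1)/\sqrt t$, and the radial factors differ by at most $\rho_+(1)/t$. Hence $\|g_z(u)-g_z(v)\|\le \omega(2\rho_+(1)/\sqrt t)+\rho_+(1)/t=:\varepsilon(t)$, and $\varepsilon(t)\to0$ as $t\to\infty$.
Edges with an endpoint within $\sqrt t$ of $z$ contribute at most $2$ each. The set $\{v:\|f_n(v)-z\|<t\}$ has diameter at most $\rho_-^{-1}(2t)$ in the graph metric, so by the degree bound its cardinality is bounded by some $N(t)$ independent of $n$ and $z$.
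Therefore the edge average is at most $\varepsilon(t)+O(N(t)/|V_n|)$.

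\textbf{Step 2: choosing $z$ with mean zero.} This is the step where the degree is used, and I expect it to be the main obstacle. Fix a base point $c=f_n(v_0)$ and set $m(z)=\frac1{|V_n|}\sum_v g_z(v)$, which is continuous in $z$.
Take $R$ much larger than both $\max_v\|f_n(v)-c\|$ and $t$. For $\|z-c\|=R$, every vector $f_n(v)-z$ is, after normalization, uniformly close to $-(z-c)/R$, and $\psi\equiv1$. So $m(z)$ is uniformly close to $F_n(-(z-c)/R)$, whose norm is $1$.
Consequently, on the sphere $c+RS_{X_n}$, the map $m/\|m\|$ is homotopic to $F_n$ composed with the antipodal map and a rescaling. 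This composite has degree $\pm\deg F_n\neq0$.
If $m$ had no zero in the ball $B(c,R)$, then $m/\|m\|$ would extend continuously over the ball, forcing this degree to be $0$. Hence there is $z$ with $m(z)=0$.
The care needed here is in keeping the homotopy away from $0$ and in checking that the sphere is oriented compatibly, so that the degree of the boundary map is really $\pm\deg F_n$.

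\textbf{Step 3: contradiction.} With this $z$, all but at most $N(t)$ points satisfy $\|g_z(v)\|=1$. Since the mean is $0$,
\[
\frac1{|V_n|^2}\sum_{u,v}\|g_z(u)-g_z(v)\|\ \ge\ \frac1{|V_n|}\sum_v\|g_z(v)\|\ \ge\ 1-\frac{N(t)}{|V_n|}.
\]
Combined with the Poincaré inequality and Step 1, this gives
\[
1-\frac{N(t)}{|V_n|}\le C\bigl(\varepsilon(t)+O(N(t)/|V_n|)\bigr).
\]
First fix $t$ so that $C\varepsilon(t)<1/4$. Then take $n$ large with $\deg F_n\ne0$; since $|V_n|\to\infty$ and $N(t)$ does not depend on $n$, the error terms become negligible. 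This is a contradiction, so $\deg F_n=0$ for all sufficiently large $n$.
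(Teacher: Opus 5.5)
Your proof is correct, but it is organized differently from the paper's. The paper argues directly. Lemma~\ref{Lemma} first turns the coarse embeddings into mean-zero maps $\psi_n\colon V_n\to B_{X_n}$ with $\Lip(\psi_n)\to0$ and average norm tending to $1$. This is done by radial truncation at the mean norm $A_n$, recentering and rescaling, with \eqref{PIII} applied to the scalar function $v\mapsto\|\varphi_n(v)\|$ to control the recentering. The averages $H^n_s(x)=\frac1{|V_n|}\sum_{v}G_n(x+s\psi_n(v))$ of the radial extension of $F_n$ then give an explicit homotopy, nonvanishing on $S_{X_n}$, from $F_n$ to the normalization of $H^n_1$. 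Since $H^n_1$ is nonvanishing on all of $B_{X_n}$, Proposition~\ref{Prop.null.homotopy} gives degree zero.

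Your $m(z)$ is a close relative of $H^n_1$. Roughly, $z$ plays the role of $-A_nx$, and you truncate the radial extension at the fixed scale $t$ instead of truncating the embedding at scale $A_n$. You run the logic contrapositively, though. Nonzero degree is read off on a sphere of radius $R$ so large that the expander image looks like a point, which is the analogue of the $s=0$ end of the paper's homotopy. This forces $m$ to vanish somewhere, and a single application of the $L_1$ Poincaré inequality then gives the contradiction.

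What your route buys is that it needs no analogue of Lemma~\ref{Lemma}. The lower coarse bound enters only through the local count $N(t)$. The vertices near the center $z$, where your estimate on normalized vectors breaks down, are simply absorbed as an $O(N(t)/|V_n|)$ error. This is exactly why you need the edge-averaged form of the Poincaré inequality rather than the Lipschitz form \eqref{PIII} used in the paper. (That form follows directly from edge expansion via the cut-cone decomposition; no Hilbert-space step is needed.) The paper's route, in exchange, gives an explicit homotopy of $F_n$ to a null-homotopic map, uses only the Lipschitz form of \eqref{PIII}, and avoids arguing by contradiction.
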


\begin{corollary}\label{Cor.Prop.H}
If a sequence of expander graphs equi-coarsely embeds into a Banach space, then the Banach space does not have Property (H) (or even rational Property (H)). In particular, if a Banach space contains the finite dimensional $(\ell_\infty^n)_n$ with uniform distortion (such as $c_0$), then it cannot have Property (H).
\end{corollary}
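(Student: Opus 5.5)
We will deduce the corollary from Theorem~\ref{Thm.main}, taking $Y_n=H_n$ and viewing the Hilbert space inside $L_1$. Suppose, for a contradiction, that $X$ has rational Property (H). Then there are a uniformly continuous $F\colon S_X\to S_{\ell_2}$, an increasing exhausting sequence $(X_k)_k$ of finite dimensional subspaces of $X$, and finite dimensional subspaces $H_k\subseteq \ell_2$ such that each $F_{\restriction S_{X_k}}\colon S_{X_k}\to S_{H_k}$ has nonzero degree. Suppose also that $(G_n)_n$ is a sequence of expanders with equi-coarse embeddings $f_n\colon V_n\to X$.

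The first step is to embed $\ell_2$ isometrically into $L_1$, for instance via Gaussian random variables. Since the target only needs to be a subspace of $L_1$ up to a fixed renormalization, the isometry sending $\ell_2$ to the span of an i.i.d.\ sequence of normalized Gaussians, suitably rescaled, works. Under this identification each $H_k$ becomes a finite dimensional subspace of $L_1$. Degree is a topological invariant, so it is unchanged by this linear isometric identification, and uniform continuity is preserved as well.

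The second step, which is the main obstacle, is to replace the embeddings into $X$ by embeddings into the finite dimensional spaces $X_k$. Each $V_n$ is finite, and $\bigcup_k X_k$ is dense in $X$. So for each $n$ we choose $k_n$, and we may take $k_n$ increasing, together with points $g_n(v)\in X_{k_n}$ satisfying $\|g_n(v)-f_n(v)\|\le 1$ for all $v\in V_n$. A perturbation by a bounded amount preserves equi-coarseness: the compression function decreases by at most $2$ and the expansion function increases by at most $2$. Hence $(g_n\colon V_n\to X_{k_n})_n$ is a sequence of equi-coarse embeddings.

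The final step is to set $F_n=F_{\restriction S_{X_{k_n}}}\colon S_{X_{k_n}}\to S_{H_{k_n}}$. These maps are equi-uniformly continuous because they are all restrictions of the single uniformly continuous map $F$. Theorem~\ref{Thm.main} then forces $F_n$ to have degree zero for all large $n$, which contradicts the nonzero degree assumption. This shows that $X$ fails rational Property (H), and hence also Property (H).

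For the "in particular" claim, we use a standard fact. Every finite metric space with $N$ points embeds isometrically into $\ell_\infty^N$ via the Fréchet embedding. Consequently, the vertex sets of any expander sequence embed isometrically into $(\ell_\infty^{|V_n|})_n$. Composing with the uniformly distorted copies of $\ell_\infty^{|V_n|}$ inside $X$ gives equi-bi-Lipschitz, and hence equi-coarse, embeddings of the expanders into $X$. Expander sequences exist, so the first part of the corollary applies.
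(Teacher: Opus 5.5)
Your proposal is correct and follows the paper's route: the paper's proof is the one-line combination of Theorem~\ref{Thm.main} with the isometric embedding of Hilbert space into $L_1$, and you supply the routine steps it leaves implicit (perturbing the embeddings into the dense union $\bigcup_k X_k$ so they land in some $X_{k_n}$, restricting the single uniformly continuous $F$ to get equi-uniform continuity, and using the Fr\'echet embedding into $\ell_\infty^{|V_n|}$ for the ``in particular'' clause). The one detail to state explicitly is that the paper defines equi-coarse embeddings with a Lipschitz upper bound, so your additive error of $2$ must be absorbed into the constant, giving $L+2$; this works because distinct vertices are at graph distance at least $1$.
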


\begin{proof}
This follows from Theorem~\ref{Thm.main} and the fact that finite dimensional Hilbert spaces are isometrically contained in $L_1$.
\end{proof}

We remark that another recent work \cite{ChengChengWang} contains a proof that $c_0$ does not have rational Property (H), see \S\ref{disclaimer} below. We also note that it is currently unknown if a Banach space which does not contain the finite dimensional $(\ell_\infty^n)_n$ with uniform distortion can equi-coarsely contain a sequence of expander graphs.

\begin{corollary}\label{Cor.Group}
There exists a finitely generated group that does not coarsely embed into any Banach space with Property (H).
\end{corollary}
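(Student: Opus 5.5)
We will derive Corollary~\ref{Cor.Group} from Corollary~\ref{Cor.Prop.H}, using the existence of groups that coarsely contain expanders. By the constructions of Gromov and Arzhantseva--Delzant (Gromov's random groups, see \cite[Section 4.8]{Gromov2003GAFA}), there is a finitely generated group $\Gamma$, with a word metric $d_\Gamma$, and a sequence of expander graphs $(G_n=(V_n,E_n))_n$ such that the vertex sets $V_n$ (with graph metric) equi-coarsely embed into $(\Gamma,d_\Gamma)$. In fact, the construction yields maps $V_n\to\Gamma$ that are equi-bi-Lipschitz on the relevant scales: the expanders are isometrically (or quasi-isometrically) embedded in the Cayley graph of $\Gamma$. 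This is the only external input we need, and we simply cite it.

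Now suppose, for contradiction, that $\Gamma$ coarsely embeds into a Banach space $X$ with Property (H) via some $f\colon\Gamma\to X$. Let $\rho_-,\rho_+$ be the compression and expansion moduli of $f$, so $\rho_-(d_\Gamma(g,h))\le\|f(g)-f(h)\|\le\rho_+(d_\Gamma(g,h))$ with $\rho_-(t)\to\infty$. Let $\varphi_n\colon V_n\to\Gamma$ be the equi-coarse embeddings, with moduli $\sigma_-,\sigma_+$. The compositions $f\circ\varphi_n\colon V_n\to X$ then have moduli $\rho_-\circ\sigma_-$ and $\rho_+\circ\sigma_+$. Possibly after replacing $\rho_-$ and $\sigma_-$ by nondecreasing minorants, these moduli are independent of $n$ and still tend to infinity. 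Hence the expanders equi-coarsely embed into $X$.

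By Corollary~\ref{Cor.Prop.H}, $X$ then fails Property (H), which is a contradiction. Therefore $\Gamma$ does not coarsely embed into any Property (H) space, and indeed into no space with rational Property (H).

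The only genuine obstacle is the external input: the existence of a finitely generated group that equi-coarsely contains an expander family. This is a deep theorem (Gromov; Arzhantseva--Delzant; alternatively Osajda's small cancellation groups, which contain expanders isometrically). Everything else is the routine check that compositions of coarse embeddings are equi-coarse, which is the step carried out above.
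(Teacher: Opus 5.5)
Your reduction matches the paper's: embed an expander family into a finitely generated group $\Gamma$, compose with a coarse embedding $\Gamma\to X$, and invoke Corollary~\ref{Cor.Prop.H}. The problem is the external input you lead with. Gromov's random groups, made rigorous by Arzhantseva--Delzant, contain the expanders only \emph{weakly}. The maps $V_n\to\Gamma$ are uniformly Lipschitz, and the preimage of any ball of fixed radius is a vanishing proportion of $V_n$, but they are not equi-coarse embeddings. Osajda points out that in the Gromov monster the expanders are not even coarsely embedded \cite{Osajda2020Acta}. So your claim that that construction gives equi-coarse, let alone (quasi-)isometric, embeddings is incorrect, and from it you cannot quote Corollary~\ref{Cor.Prop.H}, which assumes equi-coarse embeddability.

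The input you need is the one you list only as an alternative, and it is the one the paper uses: Osajda's theorem \cite[Theorem 4]{Osajda2020Acta}, which gives a finitely generated group whose Cayley graph contains a sequence of expanders isometrically. With that citation your argument is complete. (A weak embedding would in fact suffice if you reopened the proof of Lemma~\ref{Lemma}. That proof only needs a uniform Lipschitz bound and $A_n\to\infty$, and $A_n\to\infty$ follows from the fibre condition. But that is not the argument you wrote.)

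A smaller point: the paper's definition of equi-coarse embeddings requires a linear upper bound $L\,d_n(u,v)$, not just an upper modulus such as $\rho_+\circ\sigma_+$. This follows at once because $d_n$ is a path metric. Summing over the edges of a geodesic gives $\|f(\varphi_n(u))-f(\varphi_n(v))\|\le\rho_+(\sigma_+(1))\,d_n(u,v)$, but you should state it.
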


\begin{proof}
This follows from Corollary~\ref{Cor.Prop.H} the fact that there exists a finitely generated group containing a sequence of expander graphs isometrically (\cite[Theorem 4]{Osajda2020Acta}).
\end{proof}

We briefly remind the reader of the terminologies in Theorem \ref{Thm.main}. Firstly, given $k\in\N$ and $h>0$, a finite   graph $G=(V,E)$, where $V$ is the set of \emph{vertices} and $E$ the set of \emph{edges} of $G$, is called a \emph{$(k,h)$-expander} if each vertex has at most $k$ neighbors and 
\[|\{v\in V\setminus A\mid \exists u\in A, \ (v,u)\in E\}|\geq h|A|\]
for all $A\subseteq V$ with $|A|\leq |V|/2$. A \emph{sequence of expander graphs} is understood to be a sequence of $(k,h)$-expanders, for some $k\in\N$ and $h>0$, such that $\lim_n|V_n|=\infty$. As expanders must be connected, we view the vertex set of an expander as a metric space by endowing it with the shortest path distance, which we denote by $d_n$.  For  more on expander graphs, see the monograph \cite{LubotzkyBook2010}. We then say that   $(\varphi_n\colon V_n\to X_n)_n$ is a sequence of  \emph{equi-coarse embeddings} if there are $L>0$ and a nondecreasing $\rho\colon [0,\infty)\to[0,\infty)$ with $\lim_{t\to \infty}\rho(t)=\infty$  such that 
\[\rho(d_n(v,u))\leq \|\varphi_n(v)-\varphi_n(u)\|\leq Ld_n(v,u)\ \text{ for all }n\in\N\ \text{ and } v,u\in V_n.\]

\section{Proof of Main Result}

We now start presenting the proof of Theorem \ref{Thm.main}. The arguments are surprisingly simple and use just two applications of the characteristic Poincaré inequality satisfied by expanders: If $G=(V,E)$ is a $(k,h)$-expander, then  for any $f\colon V\to L_1$ we have
     \begin{equation}\label{PIII}
\frac{1}{|V|}\sum_{v\in V}\left\|f(v)-\frac{1}{|V|}\sum_{u\in V}f(u)\right\|\leq \frac{2k}{h}\Lip(f) \tag{\textbf{P.I.}}
  \end{equation}
     (see \cite[Theorem 4.7]{Ostrovskii2013Book}).
     
 Given a Banach space $X$, $B_X$ denotes its closed unit ball. We start with a lemma which modifies the equi-coarse embeddings $(V_n\to X_n)_n$ in Theorem \ref{Thm.main} to maps which better suit our purposes. 

\begin{lemma}\label{Lemma}
    Let $(X_n)_n$ be Banach spaces and $(G_n=(V_n,E_n))_n$ be a sequence of expander graphs. If there is a sequence of  equi-coarse embeddings $(V_n\to X_n)_n$, then there is a sequence of functions $(\psi_n\colon V_n\to B_{X_n})_n$ such that 
    \begin{equation}\label{Eq.prop.psi}
    \sum_{v\in V_n}\psi_n(v)=0, \ \liminf_{n\to \infty}\frac{1}{|V_n|}\sum_{v\in V_n}\|\psi_n(v)\|\geq 1\ \text{ and } \lim_{n\to\infty}\Lip(\psi_n)=0.
    \end{equation}
\end{lemma}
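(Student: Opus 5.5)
The plan is to build $\psi_n$ in three moves: radially truncate a recentred copy of the coarse embedding at a slowly growing radius, rescale into $B_{X_n}$, and then recentre so that the sum vanishes. Let $\varphi_n\colon V_n\to X_n$ be the equi-coarse embeddings, with constants $L$ and $\rho$ as in the definition. For $s>0$ let $R_s\colon X_n\to sB_{X_n}$ be the radial retraction, $R_s(x)=x$ if $\|x\|\le s$ and $R_s(x)=sx/\|x\|$ otherwise. It is $2$-Lipschitz in any normed space, and it is odd. I will choose radii $s_n\to\infty$ and centres $c_n\in X_n$, set
\[
\psi_n'(v)=\frac{1}{s_n}R_{s_n}\bigl(\varphi_n(v)-c_n\bigr),
\]
and finally put $\psi_n=\psi_n'-m_n$, where $m_n=\frac{1}{|V_n|}\sum_{v\in V_n}\psi'_n(v)$, possibly followed by a harmless renormalisation. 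Each $\psi'_n$ takes values in $B_{X_n}$ and satisfies $\Lip(\psi_n')\le 2L/s_n\to 0$. Subtracting the mean gives $\sum_v\psi_n(v)=0$ and does not change the Lipschitz constant.

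The key geometric input is bounded degree. A graph ball of radius $r$ in a $(k,h)$-expander has at most $k^{r+1}$ vertices. Suppose $A\subseteq V_n$ and all points $\varphi_n(v)$ with $v\in A$ lie in a ball of radius $s$ in $X_n$. Then $\rho(d_n(u,v))\le 2s$ for all $u,v\in A$, so $A$ has graph diameter at most $r(s):=\sup\{t:\rho(t)\le 2s\}<\infty$, and therefore $|A|\le k^{r(s)+1}$. I choose $s_n\to\infty$ so slowly that $k^{r(s_n)+1}=o(|V_n|)$; this is possible because $|V_n|\to\infty$. Then for any centre $c_n$, all but $o(|V_n|)$ vertices satisfy $\|\varphi_n(v)-c_n\|> s_n$, so $\|\psi'_n(v)\|=1$ for all but a vanishing proportion of vertices. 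The same argument shows that for any fixed $x\in X_n$ and $\varepsilon>0$, the number of $v$ with $\|\psi_n'(v)-x\|$ small is controlled only when the truncation separates the points. This needs more care, because many vertices can be sent to nearby points of the sphere $s_nS_{X_n}$.

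The main obstacle is the recentring step: showing $\|m_n\|\to 0$, so that $\|\psi_n(v)\|\ge 1-\|m_n\|$ for most $v$ and $\|\psi_n(v)\|\le 1+\|m_n\|$ everywhere. Dividing by $1+\|m_n\|$ then gives values in $B_{X_n}$ with $\liminf$ of the average norm at least $1$. In a general normed space the barycentric centre does not make the truncated mean small. My first attempt is to choose $c_n$ by a minimisation or balancing argument: take $c_n$ minimising (or nearly minimising) $c\mapsto\sum_v\|\varphi_n(v)-c\|$, a geometric median. First-order optimality then yields a norming-functional identity, which I would try to convert into smallness of the truncated mean. I would combine this with the fact that most pairs $u,v$ are at graph distance $\gtrsim\log|V_n|$, so $\|\varphi_n(u)-\varphi_n(v)\|\ge\rho(c\log|V_n|)\gg s_n$.

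If that fails, the fallback is a two-scale construction. Replace the single truncation by $\frac{1}{s_n}\bigl(R_{t_n}(\varphi_n-c_n)-R_{t_n-s_n}(\varphi_n-c_n)\bigr)$ with $t_n\gg s_n$, so that vertices far from $c_n$ contribute vectors whose directions are governed by a coarse partition. Alternatively, exploit oddness of $R_s$: the antisymmetrised map $v\mapsto\frac{1}{|V_n|}\sum_u R_{s_n}(\varphi_n(v)-\varphi_n(u))/s_n$ sums to zero automatically. For that map, the work shifts to lower-bounding its average norm.
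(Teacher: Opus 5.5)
Your setup is fine: the radial truncation at a slowly growing radius $s_n$, the counting bound $|\{v:\|\varphi_n(v)-c_n\|\le s_n\}|\le k^{r(s_n)+1}=o(|V_n|)$, and the estimate $\Lip(\psi'_n)\le 2L/s_n$ all work. But the step you yourself call the main obstacle, $\|m_n\|\to 0$, is never established, and the routes you sketch do not close it.

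\begin{itemize}
\item The geometric-median optimality condition only gives $\sum_v f_v=0$ for norming functionals $f_v\in S_{X_n^*}$ of $\varphi_n(v)-c_n$. That is information in the dual, and in a general normed space nothing converts it into smallness of $\frac{1}{|V_n|}\sum_v(\varphi_n(v)-c_n)/\|\varphi_n(v)-c_n\|$.
\item The antisymmetrised map does sum to zero, but you give no lower bound for its average norm. Even granting concentration of distances, that average norm is roughly the ratio of the mean distance to the barycentre to the mean pairwise distance. The triangle inequality only bounds this ratio below by $1/2$, not by $1$.
\end{itemize}

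More to the point, your argument never uses expansion beyond bounded degree, and this recentring step is exactly where expansion is needed.

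The missing idea is to apply \eqref{PIII} to the real-valued $L$-Lipschitz function $v\mapsto\|\varphi_n(v)-c_n\|$, with $c_n$ the barycentre. This gives $\frac{1}{|V_n|}\sum_v\bigl|\|\varphi_n(v)-c_n\|-A_n\bigr|\le 2kL/h$, where $A_n$ is the mean of these norms. By your counting, $A_n\ge(1-o(1))s_n\to\infty$. For $v$ with $\|\varphi_n(v)-c_n\|>s_n$ one has $\bigl\|\psi'_n(v)-\frac{\varphi_n(v)-c_n}{A_n}\bigr\|=\frac{|\|\varphi_n(v)-c_n\|-A_n|}{A_n}$, and $\sum_v(\varphi_n(v)-c_n)=0$. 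Together these give $\|m_n\|\le \frac{2kL}{hA_n}+O\bigl(k^{r(s_n)+1}/|V_n|\bigr)\to 0$. So, contrary to your remark, the barycentre does work here, and your construction then goes through with the renormalisation by $1+\|m_n\|$.

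This concentration of norms is the engine of the paper's proof. The paper truncates at the radius $A_n$ itself rather than at $s_n$, bounds the resulting mean by $\|m_n\|\le 2kL/h$, and divides by $A_n+2kL/h$.
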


\begin{proof} 
Fix $k\in\N$ and $h>0$ such that each $G_n$ is a $(k,h)$-expander. For each $n\in\N$, let $d_n$ be the shortest path metric on $V_n$ and let $(\varphi_n\colon V_n\to {X_n})_n$ be a sequence of equi-coarse embeddings; so there is $L>0$ and $\rho\colon [0,\infty)\to [0,\infty)$ with $\lim_{t\to \infty}\rho(t)=\infty$ such that 
\[\rho(d_n(v,u))\leq \|\varphi_n(v)-\varphi_n(u)\|\leq Ld_n(v,u) \ \text{ for all }\ n\in\N\text{ and }\ v,u\in V_n.\]
By translating the $\varphi_n$'s  if needed,  assume that $\sum_{v\in V_n}\varphi_n(v)=0$.
We start by noticing that 
\begin{equation}\label{Eq.Antoinfty}A_n=\frac{1}{|V_n|}\sum_{v\in V_n}\|\varphi_n(v)\|\underset{n\to \infty}{\longrightarrow} \infty.\end{equation}
Indeed, as each $G_n$ is a $(k,h)$-expander, the condition on $k$ implies that for each $r>0$, the balls of radius $r$ of each $V_n$ contain at most $k^{r+1}$ elements. Therefore, 
\[2A_n\geq \frac{1}{|V_n|^2} \sum_{v,u\in V_n}\|\varphi_n(v)-\varphi_n(u)\|\geq \left(1-\frac{k^{r+1}}{|V_n|}\right)\rho(r).\]
Letting $n\to \infty$ and then $r\to \infty$, we obtain that  $\lim_{n\to\infty}A_n=\infty$.

The proof now  consists of truncating the functions $\varphi_n$ so that they lie in the balls $A_nB_{X_n}$, translating them so that their mean is zero, and then rescaling them appropriately. For each $n\in\N$, let $R_n\colon X_n\to A_nB_{X_n}$ denote the radial truncation
\begin{equation*}
    R_n(x)=\left\{\begin{array}{cc}
      x   ,&  \text{ if }\ \|x\|\leq A_n,\\
       A_n\frac{x}{\|x\|},  & \text{ if }\ \|x\|>A_n. 
    \end{array}\right.
\end{equation*}
It is straightforward to check that $\Lip(R_n)\leq 2$. For $n\in\N$ and $v\in V_n$, define 
\[\theta_n(v)=R_n(\varphi_n(v))\ \text{ and }\ m_n=\frac{1}{|V_n|}\sum_{u\in V_n}\theta_n(u).\]
Let   $C=2kL/h$ and define each $\psi_n\colon V_n\to B_{X_n}$ by letting \[\psi_n(v)=\frac{\theta_n(v)-m_n}{A_n+C}\   \text{ for all } \ v\in V_n.\]
Let us now show that these maps have the required properties. The fact that $\sum_{v\in V_n}\psi_n(v)=0$ is immediate and, as $\lim_{n\to \infty}A_n=\infty$, we also have that 
\[\Lip(\psi_n)\leq \frac{2L}{A_n+C}\underset{n\to \infty}{\longrightarrow} 0.\]

We are left to show that each $\psi_n$ takes values in $B_{X_n}$ and that the limit inferior of  $\tfrac{1}{|V_n|}\sum_{v\in V_n}\|\psi_n(v)\|$  is at least $1$.  As each $v\in V_n\mapsto \|\varphi_n(v)\|\in \mathbb{R}$ has Lipschitz constant at most $L$, \eqref{PIII} and the choice of $C$ give that 
\begin{equation}\label{Eq.piii}
    \frac{1}{|V_n|}\sum_{v\in V_n}\left|\|\varphi_n(v)\|-A_n\right|\leq C.
\end{equation}
Since $\sum_{v\in V_n}\varphi_n(v)=0$, \eqref{Eq.piii} implies that
\begin{equation*}
    \|m_n\|\leq\frac{1}{|V_n|}\sum_{v\in V_n}\|\varphi_n(v)-\theta_n(v)\|\leq \frac{1}{|V_n|}\sum_{v\in V_n}|\|\varphi_n(v)\|-A_n|\leq C.
\end{equation*}
By the formula of $\psi_n$, this  implies that these maps take values in $B_{X_n}$ as needed.
Also, since 
\[\|\theta_n(v)\|=\min\{\|\varphi_n(v)\|,A_n\}\geq \|\varphi_n(v)\|-|\|\varphi_n(v)\|-A_n|,\]
 \eqref{Eq.piii}  implies that
\begin{equation*}
    \frac{1}{|V_n|}\sum_{v\in V_n}\|\theta_n(v)\|\geq     \frac{1}{|V_n|}\sum_{v\in V_n}\|\varphi_n(v)\|-    \frac{1}{|V_n|}\sum_{v\in V_n}|\|\varphi_n(v)\|-A_n|\geq A_n-C.
\end{equation*}
We then conclude that 
\begin{align*}
     \frac{1}{|V_n|}\sum_{v\in V_n}\|\psi_n(v)\|\geq \frac{\frac{1}{|V_n|}\sum_{v\in V_n}\|\theta_n(v)\|-C}{A_n+C}\geq \frac{A_n-2C}{A_n+C}\underset{n\to \infty}{\longrightarrow}1,
\end{align*}
as desired.
\end{proof}
 
Another trivial observation needed for the proof of Theorem \ref{Thm.main} is the following sufficient condition for vanishing degree.

\begin{proposition}\label{Prop.null.homotopy}
    Let $X$ and $Y$ be finite dimensional normed   spaces and $f\colon B_X\to B_Y$ be a continuous map such that $\inf_{x\in B_X}\|f(x)\|>0$. Then the map $F\colon S_X\to S_Y$ given by $F(x)=f(x)/\|f(x)\|$ has degree zero.
\end{proposition}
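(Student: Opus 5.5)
The idea is that $F$ extends continuously from the sphere $S_X$ to the whole ball $B_X$ with values in $S_Y$. A map that extends over the ball is null-homotopic, and a null-homotopic map has degree zero.

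First, since $\inf_{x\in B_X}\|f(x)\|>0$, the map $\tilde F\colon B_X\to S_Y$ given by $\tilde F(x)=f(x)/\|f(x)\|$ is well defined and continuous on all of $B_X$, and it restricts to $F$ on $S_X$. Only nonvanishing of $f$ is used here, which the positive infimum guarantees.

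Next, I will build an explicit homotopy on $S_X\times[0,1]$:
\[H(x,t)=\tilde F(tx)=\frac{f(tx)}{\|f(tx)\|}.\]
This is continuous because $(x,t)\mapsto tx\in B_X$ is continuous (by convexity of $B_X$) and $\tilde F$ is continuous. It satisfies $H(\cdot,1)=F$, and $H(\cdot,0)$ is the constant map $f(0)/\|f(0)\|$. Thus $F$ is homotopic to a constant map.

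Finally, degree is a homotopy invariant, and a constant map $S_X\to S_Y$ has degree zero. This requires $\dim X=\dim Y$ for degree to be defined, which is implicit in the statement. Identifying $S_X$ and $S_Y$ with Euclidean spheres via radial homeomorphisms, a constant map factors through a point, so it induces the zero map on top homology $H_{d-1}$. Hence $\deg F=0$. The case $\dim X=\dim Y=1$, where the spheres are two points, needs a brief separate remark under whatever degree convention is used. There the extension to the connected interval $B_X$ forces $F$ to take both points to the same point, which is degree zero in the usual convention.

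There is no real obstacle. The only point needing care is to phrase the degree-zero conclusion through homotopy invariance of degree for maps between spheres of finite-dimensional normed spaces, which are homeomorphic to standard spheres.
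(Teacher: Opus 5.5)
Your proof is correct and follows the paper's argument: the paper composes the normalized $f$ with an arbitrary contraction $g_s$ of $B_X$ from the identity to the constant $0$. You use the explicit radial contraction $x\mapsto tx$, which gives the same null-homotopy of $F$ to the constant $f(0)/\|f(0)\|$.
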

 
\begin{proof}
Let $g_s\colon B_X\to B_X$, $s\in [0,1]$, be a homotopy from $g_0=\mathrm{Id}_{B_X}$ to the constant map $g_1\equiv 0$. Then the map $G_s(x)=f(g_s(x))/\|f(g_s(x))\|$ defines a homotopy in $S_Y$ from $G_0=F$ to the constant map $G_1\equiv f(0)/\|f(0)\|$. Since null-homotopic maps have degree zero, the conclusion follows.
\end{proof}

\begin{proof}[Proof of Theorem \ref{Thm.main}]
We start fixing the objects given in the statement of the theorem together with Lemma \ref{Lemma}. Discarding finitely many terms, we may assume that $|V_n|\geq5$ for all $n\in\N$. Let  $k\in\N$ and $h>0$ be such that each $G_n=(V_n,E_n)$ is a $(k,h)$-expander. For each $n\in\N$ let $d_n$ be the shortest path metric on $V_n$. Let $(\psi_n\colon V_n\to B_{X_n})_n$ be the maps given by Lemma \ref{Lemma} due to the existence of  equi-coarse embeddings $(V_n\to X_n)_n$ and recall that  $(F_n\colon S_{X_n}\to S_{Y_n})_n$ denotes a  sequence of equi-uniformly continuous maps.

We  now  show how to use Proposition \ref{Prop.null.homotopy} in order to obtain the vanishing of the degree of $F_n$ for large $n$. The proof has two parts: (1) we first show each $F_n$ to be homotopic to some other map, and then (2) we show that this other map is the normalization of a function on $B_{X_n}$ which avoids zero. We start extending each $F_n$ radially to $G_n:2B_{X_n}\to2B_{Y_n}$ by letting \[
  G_n(x)=
  \begin{cases}
    \norm{x}F_n\left(\frac{x}{\norm{x}}\right),&x\neq0,\\
    0,&x=0.
  \end{cases}
\]
Clearly,   each $G_n$ is  norm preserving and its restriction to $S_{X_n}$ is $F_n$. It is also straightforward to check that, as $(F_n)_n$ are equi-uniformly continuous,  so are $(G_n)_n$ (e.g., see \cite[Proposition 2.9]{OdellSchlumprecht1994Acta}). Fix then an increasing $\omega\colon [0,\infty)\to [0,\infty)$  with $\lim_{t\to 0}\omega(t)=0$ such that
\[\|G_n(x)-G_n(y)\|\leq \omega(\|x-y\|)\ \text{ for all }\ n\in\N\ \text{ and  }x,y\in 2B_{X_n}.\]

  For each  $n\in\N$,   $ 
  x\in B_{X_n}$, and  $s\in[0,1]$,  let 
\begin{equation*}
 H^n_s(x)=\frac{1}{|V_n|}\sum_{v\in V_n} G_n(x+s\psi_n(v)).
  \label{eq:average}
\end{equation*}
Then, for each  $x\in B_{X_n}$ and $s\in[0,1]$, applying \eqref{PIII} to the map   
\[
v\in V_n\mapsto G_n(x+s\psi_n(v))\in Y_n\subseteq  L_1 ,
\]
we obtain that, for all $n\in\N$,
\begin{equation*}
  \begin{aligned}
   \frac{1}{|V_n|} \sum_{v\in V_n} \norm{G_n(x+s\psi_n(v))-H^n_s(x)}
    &\leq \frac{2k}{h}\omega\left(\Lip(\psi_n)\right).
  \end{aligned}
  \label{eq:deviation}
\end{equation*}
As each $G_n$ is norm preserving, this inequality gives that
\begin{align}
   \norm{H^n_s(x)}&\geq \frac{1}{|V_n|}\sum_{v\in V_n}\norm{x+s\psi_n(v)}-\frac{2k}{h}\omega\left(\Lip(\psi_n)\right)
  \label{eq:key}
\end{align}
for all  $n\in\N$, all 
      $x\in B_{X_n}$, and all  $ s\in[0,1]$.
As $ \sum_{v\in V_n}\psi_n(v)=0$ (see \eqref{Eq.prop.psi}), 
\begin{align*}
  \frac{1}{|V_n|}\sum_{v\in V_n}\norm{x+s\psi_n(v)}\geq 
   \left\|x+s\frac{1}{|V_n|}\sum_{v\in V_n}\psi_n(v)\right\|  \geq\left\|x\right\|, 
\end{align*}
and it follows that 
  \begin{equation} \label{Eq.lowerboundH1-eps}          \norm{H^n_s(x)} \geq 1-\frac{2k}{h}\omega\left(\Lip(\psi_n)\right)\ \text{ for all }\ x\in S_{X_n}\ \text{ and }\ s\in [0,1].\end{equation}

By our choice of $(\psi_n)_n$,  $\lim_n\Lip(\psi_n)=0$ (see \eqref{Eq.prop.psi}). Hence, since  $\lim_{t\to 0}\omega(t)=0$, we can  pick $n_0\in\N$ such that   \linebreak$4k\omega (\Lip(\psi_n))<h$ for all $n\geq  n_0$. By \eqref{Eq.lowerboundH1-eps},   $\norm{H^n_s(x)} \geq 1/2$ for all $n\geq n_0$, $x\in S_{X_n}$, and $s\in [0,1]$. We can then define,  for any given $n\geq n_0$,
\begin{equation}
  f^n_s(x)=\frac{H^n_s(x)}{\norm{H^n_s(x)}}, \ \text{ for all }\ x\in S_{X_n} \ \text{ and }\ s\in  [0,1].
  \label{eq:homotopy}
\end{equation}
So, $(f^n_s)_{s\in [0,1]}$ is a homotopy starting at $f^n_0=F_n$. This completes the first step in our approach towards applying Proposition \ref{Prop.null.homotopy}. 

We are  left to show that  $f^n_1$ has degree zero for $n$ sufficiently large; for this we show that $H^n_1$ is bounded away from zero on $B_{X_n}$. Notice that, using that $\sum_{v\in V_n} \psi_n(v)=0$ once again,
\begin{align*}
  \frac{1}{|V_n|}\sum_{v\in V_n}\norm{\psi_n(v)} & \leq\frac{1}{|V_n|}\sum_{v\in V_n}\norm{x+\psi_n(v)}+\norm{x}\\
  &\leq\frac{1}{|V_n|}\sum_{v\in V_n}\norm{x+\psi_n(v)}+\norm{x+\frac{1}{|V_n|}\sum_{v\in V_n}\psi_n(v)}\\
 &\leq  \frac{2}{|V_n|}\sum_{v\in V_n}\norm{x+\psi_n(v)}.
\end{align*}
Therefore, applying  \eqref{eq:key} with $s=1$, we get that 
\begin{equation}
 \norm{H^n_1(x)}\geq  \frac{1}{2|V_n|}\sum_{v\in V_n}\norm{\psi_n(v)}-\frac{2k}{h}\omega\left(\Lip(\psi_n)\right)\ \text{ for all }\ x\in B_{X_n}.
  \label{eq:ball}
\end{equation}

By our choice of $(\psi_n)_n$ (see \eqref{Eq.prop.psi}), we can pick  $n_1\geq n_0$ such that
\[\frac{1}{2|V_n|}\sum_{v\in V_n}\norm{\psi_n(v)}\geq \frac{1}{4}\ \text{ and }\ \frac{2k}{h}\omega\left(\Lip(\psi_n)\right)\leq\frac{1}{8}\ \text{ for all } \ n\geq n_1.\]
Then  \eqref{eq:ball} gives that 
\begin{equation*}
 \norm{H^n_1(x)}\geq\frac{1}{8}\ \text{ for all }\ x\in B_{X_n}.
\end{equation*}
By Proposition \ref{Prop.null.homotopy}, this shows that $f^n_1$ has degree zero for all $n\geq n_1$.\end{proof}

% \subsection{AI usage disclaimer}
% Theorem \ref{Thm.main} was obtained by  GPT-6 Astra with little input from the authors.   The organization and exposition of the paper are the authors' responsibility only. Moreover, all work done in collaboration with GPT-6 Astra  was independently checked and reworked by the authors,   who also take full responsibility for the correctness  and integrity of the work. 

\subsection{AI usage disclaimer}
A proof of Theorem \ref{Thm.main} was obtained by the authors using GPT-6 Astra with little of their own input. The first proposed proof used a family of Markov chains on the $\ell_\infty$-grids $\{-m,\dots, m\}^n$ that have uniform spectral gaps, with each chain being an $n$-fold $\ell_\infty$-product of a stationary, reversible chain on $\{-m,\dots, m\}$ with spectral gap independent of $m$. The authors recognized that a spectral gap was essentially all that was needed for the proof to go through and prompted GPT-6 for a new proof that would work for any Banach space coarsely containing bounded degree expanders. The authors independently verified the arguments and wrote the proofs and the rest of the article in their preferred style. The authors take full responsibility for the correctness and integrity of the work.

\subsection{Lean formalization}
Lean formulations of the main theorem and its corollaries,
together with their supporting proofs, were produced using
GPT-6 Astra and checked in Lean 4 (version 4.33.1), using
Mathlib and additional source files included in the repository.
The source files and verification instructions are available at
\begin{center}
\url{https://github.com/demendoncabraga/propertyH_lean}
\end{center}
\noindent
The only external mathematical result supplied as an explicit
hypothesis is Osajda's \cite[Theorem 4]{Osajda2020Acta}, used
in Corollary \ref{Cor.Group}.

\subsection{Disclaimer on originality of the work} \label{disclaimer}
A proof of the failure of rational Property (H) for $c_0$ appeared on arXiv on September 15th, 2026 in \cite{ChengChengWang}. We obtained the proofs of the results in this paper by September 13th, 2026, independent of \cite{ChengChengWang}. While results being obtained independently by different groups has always been part of mathematical research, we believe that this may become more common with the advent of new technologies. After some consideration, we decided to still present this paper as our final draft is short and the main results, Theorem~\ref{Thm.main} and Corollaries~\ref{Cor.Prop.H} and \ref{Cor.Group}, cover a potentially larger class of Banach spaces than just those which contain the finite dimensional $(\ell_\infty^n)_n$ with uniform distortion.
 
% \begin{disclaimer*}
% Other proofs of $c_0$ failing Property (H)
% and of Corollary~\ref{Corollary.Johnson} appeared on arXiv on September 15th, 2026 in \cite{ChengChengWang}. We obtained the proofs of the results in this paper by September 13th, 2026, independent of \cite{ChengChengWang}. While results being obtained independently by different groups has always been part of mathematical research, we believe that this may become more common with the advent of new technologies. After some consideration, we decided to still present this paper as our final draft is short and the main results, Theorem~\ref{Thm.main} and Corollary~\ref{Cor.Prop.H}, cover a potentially larger class of Banach spaces than just those of trivial cotype.
% \end{disclaimer*}

\subsection{Acknowledgements} This paper was written under the auspices of the
American Institute of Mathematics (AIM) SQuaREs program as part of the
``Nonlinear geometry of Banach spaces''  SQuaRE project. The authors are also indebted to Cynthia Bortolotto and João Pedro Ramos for sharing their files for Lean formalization; without this, the Lean formalization would not have been possible. B. M. Braga would also like to thank  Christian Rosendal,  Mitchel Taylor, and Cláudio Verdun for several discussions about Lean formalization.

\bibliographystyle{amsalpha}
 \bibliography{bibliography}

@article{KasparovYu2012GeoTop,
author = {G. Kasparov and G. Yu},
title = {{The Novikov conjecture and geometry of Banach spaces}},
volume = {16},
journal = {Geometry \& Topology},
number = {3},
publisher = {MSP},
pages = {1859 -- 1880},
year = {2012},
doi = {10.2140/gt.2012.16.1859},
URL = {https://doi.org/10.2140/gt.2012.16.1859}
}

@article {SkandalisTuYu2002Top,
    AUTHOR = {Skandalis, G. and Tu, J. L. and Yu, G.},
     TITLE = {The coarse {B}aum-{C}onnes conjecture and groupoids},
   JOURNAL = {Topology},
  FJOURNAL = {Topology. An International Journal of Mathematics},
    VOLUME = {41},
      YEAR = {2002},
    NUMBER = {4},
     PAGES = {807--834},
      ISSN = {0040-9383},
   MRCLASS = {58J22 (19K56 46L80 46L85)},
  MRNUMBER = {1905840},
MRREVIEWER = {Paul\ D.\ Mitchener},
       DOI = {10.1016/S0040-9383(01)00004-0},
       URL = {https://doi.org/10.1016/S0040-9383(01)00004-0},
}

@article {Gromov2003GAFA,
    AUTHOR = {Gromov, M.},
     TITLE = {Random walk in random groups},
   JOURNAL = {Geom. Funct. Anal.},
  FJOURNAL = {Geometric and Functional Analysis},
    VOLUME = {13},
      YEAR = {2003},
    NUMBER = {1},
     PAGES = {73--146},
      ISSN = {1016-443X,1420-8970},
   MRCLASS = {20F65 (20F67 20P05 60G50)},
  MRNUMBER = {1978492},
MRREVIEWER = {Thomas\ Delzant},
       DOI = {10.1007/s000390300002},
       URL = {https://doi.org/10.1007/s000390300002},
}

@ARTICLE{ChengChengWang,
       author = {{Cheng}, L. and {Cheng}, Q.  and {Wang}, Y.},
        title = "{On Property (H) of $c_0$ and the Sphere Problems of Gromov and Johnson}",
      journal = {arXiv e-prints},
         year = 2026,
        month = sep,
          eid = {arXiv:2609.15817},
        pages = {arXiv:2609.15817},
archivePrefix = {arXiv},
       eprint = {2609.15817},
 primaryClass = {math.FA},
       adsurl = {https://ui.adsabs.harvard.edu/abs/2026arXiv260915817C}
}

@book {Ostrovskii2013Book,
    AUTHOR = {Ostrovskii, M.},
     TITLE = {Metric embeddings},
    SERIES = {De Gruyter Studies in Mathematics},
    VOLUME = {49},
      NOTE = {Bilipschitz and coarse embeddings into Banach spaces},
 PUBLISHER = {De Gruyter, Berlin},
      YEAR = {2013},
     PAGES = {xii+372},
      ISBN = {978-3-11-026340-4; 978-3-11-026401-2},
   MRCLASS = {46-01 (46-02 46B20 46B85)},
  MRNUMBER = {3114782},
MRREVIEWER = {Florent\ Baudier},
       DOI = {10.1515/9783110264012},
       URL = {https://doi.org/10.1515/9783110264012},
}

@book {LubotzkyBook2010,
    AUTHOR = {Lubotzky, A.},
     TITLE = {Discrete groups, expanding graphs and invariant measures},
    SERIES = {Modern Birkh\"{a}user Classics},
      NOTE = {With an appendix by Jonathan D. Rogawski,
              Reprint of the 1994 edition},
 PUBLISHER = {Birkh\"{a}user Verlag, Basel},
      YEAR = {2010},
     PAGES = {iii+192}
}

@article {OdellSchlumprecht1994Acta,
    AUTHOR = {Odell, E. and Schlumprecht, Th.},
     TITLE = {The distortion problem},
   JOURNAL = {Acta Math.},
  FJOURNAL = {Acta Mathematica},
    VOLUME = {173},
      YEAR = {1994},
    NUMBER = {2},
     PAGES = {259--281},
      ISSN = {0001-5962,1871-2509},
   MRCLASS = {46B20},
  MRNUMBER = {1301394},
MRREVIEWER = {G.\ Schechtman},
       DOI = {10.1007/BF02398436},
       URL = {https://doi.org/10.1007/BF02398436},
}

@article {ChengWang2018JMAA,
    AUTHOR = {Cheng, Q. and Wang, Q.},
     TITLE = {On {B}anach spaces with {K}asparov and {Y}u's {P}roperty
              ({H})},
   JOURNAL = {J. Math. Anal. Appl.},
  FJOURNAL = {Journal of Mathematical Analysis and Applications},
    VOLUME = {457},
      YEAR = {2018},
    NUMBER = {1},
     PAGES = {200--213},
      ISSN = {0022-247X,1096-0813},
   MRCLASS = {46E20},
  MRNUMBER = {3702702},
MRREVIEWER = {Isabel\ Marrero},
       DOI = {10.1016/j.jmaa.2017.08.005},
       URL = {https://doi.org/10.1016/j.jmaa.2017.08.005},
}

@article{Yu2000,
	author = {Yu, G.},
	doi = {10.1007/s002229900032},
	fjournal = {Inventiones Mathematicae},
	journal = {Invent. Math.},
	number = {1},
	pages = {201--240},
	title = {The coarse {B}aum-{C}onnes conjecture for spaces which admit a uniform embedding into {H}ilbert space},
	volume = {139},
	year = {2000}}

@article {Osajda2020Acta,
    AUTHOR = {Osajda, D.},
     TITLE = {Small cancellation labellings of some infinite graphs and
              applications},
   JOURNAL = {Acta Math.},
  FJOURNAL = {Acta Mathematica},
    VOLUME = {225},
      YEAR = {2020},
    NUMBER = {1},
     PAGES = {159--191},
      ISSN = {0001-5962,1871-2509},
   MRCLASS = {05C63 (05C78 20F06 20F65 57M15)},
  MRNUMBER = {4176066},
MRREVIEWER = {Jingyin\ Huang},
       DOI = {10.4310/acta.2020.v225.n1.a3},
       URL = {https://doi.org/10.4310/acta.2020.v225.n1.a3},
}

\end{document}